\documentclass{amsart}

\newcommand{\cal}{foo}
\usepackage{john}

\newcommand{\prim}{\oper{prim}}

\usepackage[all,cmtip]{xy}

\title{Ordinary modular forms and companion points on the eigencurve}
\author{John Bergdall}
\date{\today}
\email{bergdall@brandeis.edu}
\urladdr{http://people.brandeis.edu/~bergdall}
\address{John Bergdall\\ Brandeis University \\ MS 050 \\415 South Street\\ Waltham, MA 02454\\USA}
\subjclass[2000]{11F33, 11F80, 11F85}
\thanks{The author would like to heartily thank his Ph.D. advisor Jo\"el Bella\"iche, first for his suggestion to think about the problem of companion forms as a question about the eigencurve and second for his energy, enthusiasm and advice while I was preparing early drafts of this article. Thanks are also due to the anonymous referee for many helpful suggestions and corrections.}

\begin{document}

\begin{abstract}
We give a new proof of a result due to Breuil and Emerton which relates the splitting behavior at $p$ of the $p$-adic Galois representation attached to a $p$-ordinary modular form to the existence of an overconvergent $p$-adic companion form for $f$.
\end{abstract}

\maketitle

\section{Introduction}\label{sec:intro}
The goal of this paper is to give a new proof of the theorem due to Breuil and Emerton \cite[Theorem 1.1.3]{BreuilEmerton-Ordinary} that the critical refinement of a $p$-ordinary modular form $f$ has a companion form $g$ if and only if the $p$-adic representation attached to $f$ is split at $p$. The original proof of Breuil and Emerton uses rigid analytic geometry to explicitly construct an overconvergent $p$-adic companion form in the cohomology of modular curves. The proof given here, on the other hand, uses completely different tools. Our technique is to view $f$ in a $p$-adic family on the eigencurve and study how Galois theoretic properties, namely the Hodge-Tate weights, vary infinitesimally. The argument we give may be summarized by saying that $p$-ordinary forms which are split at $p$ are distinguished from other points by the presence of ramification of the eigencurve over the weight space. In the introduction we will give a precise statement of the theorem, an indication of the proof and a comparison of this proof to prior ones.

Fix a prime $p$. Let $f$ be a normalized cuspidal newform of level $\Gamma_1(Np^r)$ with $(N,p) = 1$ and $r \geq 0$ an integer. Let $k \geq 2$ be the weight of $f$ and denote by $\ge$ its nebentypus character. In this notation, $N$ is known as the tame level of $f$. We will write the $q$-expansion of $f$ at infinity as $\sum_{n\geq 1} a_n(f) q^n$. It is well known that there exists a Galois representation
\begin{equation*}
\bar \rho_f : G_{\Q,Np} \goto \GL_2(\bar \Q_p)
\end{equation*}
uniquely characterized by the fact $\bar \rho_f$ is unramified at $\ell \ndvd Np$, and for such $\ell$ the characteristic polynomial of the image of a geometric Frobenius element $\bar \rho_f(\Frob_\ell)$ is $X^2 - a_\ell(f)X + \ell^{k-1}\ge(\ell)$.  The representation $\bar \rho_f$ is  irreducible, de Rham at $p$ and crystalline at $p$ if $r = 0$.

Recall that we say that $f$ is ordinary at $p$ if $a_p(f)$ is a $p$-adic unit. An equivalent criterion is that the representation $\bar \rho_{f,p} := \restrict{\bar \rho_f}{G_{\Q_p}}$ is ordinary in the sense of Greenberg, i.e. up to a change of basis we have that
\begin{equation*}
\bar \rho_{f,p} = \begin{pmatrix} \eta & * \\  & \eta^{-1} \chi_p^{1-k} \end{pmatrix}
\end{equation*} 
where $\eta$ is an unramified character and $\chi_p$ is the $p$-adic cyclotomic character. Since ordinary representations are by definition reducible, it is natural to ask when such representations are split. The criterion of Breuil-Emerton gives necessary and sufficient conditions for $\bar \rho_f$ being split at $p$ in terms of overconvergent $p$-adic compaion forms. In order to explain this, we also have to briefly recall (but see the beginning of \S \ref{sec:proof} for more) the setup of the eigencurve. 

Let $\Gamma := \Gamma_1(N)\intersect \Gamma_0(p)$ and $\cal H$ be the abstract Hecke algebra generated by the operators $(T_\ell)_{\ell \ndvd Np}, U_p$ and $(\langle a \rangle)_{a \ndvd Np}$ for the group $\Gamma$. The tame level $N$ eigencurve $\cal C$ is a rigid analytic curve equipped with a locally finite and flat map $\kappa: \cal C \goto \cal W := \Hom(\Z_p^\x, \G_m)$. A point $x \in \cal C(\bar \Q_p)$ of weight $\kappa(x)$ corresponds to a finite slope system of eigenvalues, denoted $x: \cal H \goto \bar \Q_p$, for the Hecke algebra acting on the space $M_{\kappa(x)}^\dagger(\Gamma)$ of overconvergent $p$-adic modular forms of weight $\kappa(x)$. Given a point $x \in \cal C(\bar \Q_p)$ such that $\log \kappa(x) = k \geq 2$ is an integer, we say that $x$ has a companion point on $\cal C$ if there exists a point $z \in \cal C(\bar \Q_p)$ such that $p^{k-1}z(U_p) = y(U_p)$, $\ell^{k-1}z(T_\ell) = y(T_\ell)$ for all primes $\ell \ndvd N$ and $z(\diamond{a}) = y(\diamond{a})$. For example, if $f \in S_k(\Gamma)$ is in the image of $\theta^{k-1}: M_{2-k}^\dagger(\Gamma) \goto S_k^\dagger(\Gamma)$ then the corresponding point on $\cal C$ has a companion point.

Now let $f$ again be a $p$-ordinary classical newform of level $\Gamma_1(Np^r)$, weight $k\geq 2$ and nebentypus $\ge$. Well known constructions (see \S 2 for details) produce a point $x_{f,\crit} \in \cal C(\bar \Q_p)$ with the following property: $x_{f,\crit}$ arises from a classical eigenform $\twid f \in S_k(\Gamma)$ whose Galois representation $\bar \rho_{\twid f}$ differs from $\bar \rho_f$ by at worst a finite order character and such that $\twid f$ has slope $k-1$ in the sense that $v_p(a_p(\twid f)) = k-1$. The following theorem describes those $f$ for which $\bar \rho_f$ is split at $p$.
\begin{theorem}\label{thm:maintheorem}
Let $f$ be as above. The following are equivalent:
\begin{enumerate}
 \item $\bar \rho_f$ is split at $p$;\label{mainthmcond:split}
 \item $x_{f,\crit}$ has a companion point;\label{mainthmcond:companionpt}
 \item the map $\kappa: \cal C \goto \cal W$ is ramified at $x_{f,\crit}$.\label{mainthmcond:ramified}
\end{enumerate}
\end{theorem}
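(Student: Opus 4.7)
I would proceed via the cycle $(1) \Longrightarrow (3) \Longrightarrow (2) \Longrightarrow (1)$, with the Galois-to-geometry translation $(3) \Longrightarrow (1)$ carrying most of the technical weight. The organizing principle is that the eigencurve $\mathcal{C}$ carries, in a neighborhood of $x_{f,\crit}$, a trianguline family of two-dimensional $p$-adic Galois representations whose triangulation parameters are read off from the Hecke eigenvalues (Kisin, Colmez, Bella\"iche--Chenevier). Concretely, if $A = \widehat{\mathcal{O}}_{\mathcal{C}, x_{f,\crit}}$ with residue field $L$, then absolute irreducibility of $\bar\rho_f$ promotes the pseudocharacter coming from $x \mapsto \mathrm{tr}\,\rho_x$ to an honest deformation $\rho_A : G_{\Q, Np} \to \GL_2(A)$, and $\rho_A|_{G_{\Q_p}}$ admits an ordered triangulation
\[
0 \to \mathcal{R}_A(\delta_1) \to D_{\mathrm{rig}}(\rho_A|_{G_{\Q_p}}) \to \mathcal{R}_A(\delta_2) \to 0
\]
whose parameters are determined by $U_p$ and $\kappa$. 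At $x_{f,\crit}$ the critical refinement specializes to $\delta_1$ unramified with Sen weight $k-1$ and $\delta_2$ of Sen weight $0$.

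\emph{$(3) \Longrightarrow (1)$.} Ramification of $\kappa$ yields a surjection $A \twoheadrightarrow L[\varepsilon]$ along which $\kappa$ is constant to first order. Pushing forward $\rho_A$ produces a first-order trianguline deformation of $\bar\rho_{f,p}$ in which both Hodge--Tate--Sen weights remain pinned at $\{0, k-1\}$. Because the sub-object of the critical triangulation carries the \emph{larger} weight $k-1$, an $\operatorname{Ext}^1$ computation in the category of trianguline $(\varphi, \Gamma)$-modules with prescribed weights forces the extension class of $\bar\rho_{f,p}$ as an extension of $\eta^{-1}\chi_p^{1-k}$ by $\eta$ to vanish; equivalently, $\bar\rho_{f,p}$ is split.

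\emph{$(1) \Longrightarrow (2) \Longrightarrow (3)$.} If $\bar\rho_{f,p}$ splits at $p$, then the corresponding direct summand decomposition of $D_{\mathrm{rig}}$ produces an eigenform $g$ of weight $2-k$, overconvergent and new at $p$, whose image under $\theta^{k-1}$ is (a scalar multiple of) $\tilde f$; the associated system of Hecke eigenvalues is a companion point $z \in \mathcal{C}(\bar\Q_p)$ for $x_{f,\crit}$. Conversely, given a companion point $z$, varying $z$ infinitesimally through the weight direction and applying $\theta^{\kappa(z)-1}$ yields an overconvergent family over $L[\varepsilon]$ of critical slope eigenforms specializing to $\tilde f$, which provides a tangent vector to $\mathcal{C}$ at $x_{f,\crit}$ annihilated by $d\kappa$; this is exactly ramification of $\kappa$.

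\emph{Main obstacle.} The deepest step is $(3) \Longrightarrow (1)$: translating a purely geometric assertion about $\kappa$ into splitness of the local Galois representation. The argument rests on infinitesimal analysis of trianguline families at a critical point, and in particular on an $\operatorname{Ext}^1$ calculation in which the positivity of the Sen weight of $\delta_1$ (namely, $k-1 > 0$) is what makes the relevant cohomology group one-dimensional and identifies it with the extension class of $\bar\rho_{f,p}$. Carefully identifying the image of the tangent vector from ramification inside this $\operatorname{Ext}^1$, and matching it with the local splitting obstruction, is where the work lies.
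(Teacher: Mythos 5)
Your plan uses the right circle of ideas (the eigencurve as a refined/trianguline family, Kisin's crystalline periods, Bella\"iche--Chenevier's infinitesimal analysis, the Sen weight $s=\log\kappa-1$), but the cycle of implications you propose has two genuine gaps, and the overall strategy misses the specific trick that makes this paper's proof work.

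The most serious problem is your step $(1)\Longrightarrow(2)$. You assert that if $\bar\rho_{f,p}$ splits then the ``corresponding direct summand decomposition of $D_{\mathrm{rig}}$ produces an eigenform $g$ of weight $2-k$.'' That is not a consequence you get for free; it is precisely the content of the Breuil--Emerton theorem (their explicit construction of the overconvergent companion form in the cohomology of modular curves), and it is exactly the step this paper is designed to \emph{avoid}. The paper never proves $(a)\Longrightarrow(b)$ directly. Instead it proves $(a)\Longrightarrow(c)$ by pure Galois deformation theory (Proposition \ref{prop:constantweightprop}: in the split case the refined crystalline line has weight $k-1$, so every deformation in $X_{\bar\rho_x}^{U_p(x)}$, and in particular every tangent vector of $\cal C$ at $x$, has constant Sen weight $k-1$, hence $d\kappa=0$), and then it gets from $(c)$ to $(b)$ by a dimension count: since $\kappa$ is finite flat, ramification at $x$ is equivalent to $\dim S_{\kappa(x)}^\dagger(\Gamma)_{(x)}>1$, and Proposition \ref{prop:control} together with newform theory identifies the excess dimension with the presence of a $\theta$-image, i.e.\ a companion point. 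Producing the companion form is exactly what is bypassed.

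Your step $(2)\Longrightarrow(3)$ (varying the companion point $z$ infinitesimally in weight and applying $\theta^{\kappa(z)-1}$ to get a weight-constant family at $x_{f,\crit}$) does not obviously make sense: $\theta^{k-1}$ is tied to the integer weight $2-k$ and is not an operator that deforms along the weight. The paper's route via lengths of fibers and flatness, and Coleman's control theorem in slope exactly $k-1$ (Proposition \ref{prop:control}, Corollary \ref{cor:jump-dim}), is the clean version of what you want here, and it gives $(b)\Longleftrightarrow(c)$ in both directions at once.

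Finally, your hard step $(3)\Longrightarrow(1)$ reverses the paper's deformation-theoretic implication, and the $\operatorname{Ext}^1$ argument as you sketch it conflates two different objects: a nonzero first-order deformation class lies in $H^1(G_{\Q_p},\operatorname{ad}\bar\rho_{x,p})$, while the extension class measuring splitness of $\bar\rho_{f,p}$ lives in $H^1(G_{\Q_p},\eta^2\chi_p^{k-1})$; the former being nonzero does not force the latter to vanish. The argument that does work goes the other way: \emph{assume} split, observe that the weakly admissible filtration then forces the slope-$(k-1)$ crystalline eigenline to have Hodge--Tate weight $k-1$ (in the non-split ordinary case it has weight $0$, since weight $k-1$ would yield a sub-representation of weight $k-1$ which does not exist), and then Kisin/Bella\"iche--Chenevier's constancy of crystalline periods pins that weight in every tangent direction, giving ramification. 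That, plus the $(b)\Longleftrightarrow(c)$ equivalence and the easy implication $(b)\Longrightarrow(a)$ (a companion form's $\theta^{k-1}$-image is well known to be split, cf.\ \cite[Theorem 6.6(ii)]{Kisin-OverconvergentModularForms}), closes the circle without ever constructing a companion form from the Galois side.
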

With the result made precise, let us give a short history of the result and a recollection of previous techniques. First, Theorem \ref{thm:maintheorem} is the characteristic zero analogue of Gross' famous result \cite{Gross-TamenessCriterion} relating the splitting behavior of $\bar \rho_f \bmod p$ to the existence of companion forms modulo $p$ in the sense of Serre. Essentially, a classical eigenform $g$ is produced so that $\theta^{k-1}(g)$ and $f$ have congruent Hecke eigenvalues modulo $p$. We note that the mod $p$ result is separate from ours and cannot be deduced from Theorem \ref{thm:maintheorem}. 

Under some technical hypotheses, progress towards characteristic zero companion forms was made by Ghate \cite{Ghate-OrdinaryForms}, building on the work of Buzzard-Taylor \cite{BuzzardTaylor-CompanionForms}. The techniques there are to lift Gross's theorem to the characteristic zero setting via $\Lambda$-adic families using Mazur's deformation theory of Galois representations, but in a way which is quite different to ours. The first complete proof of the theorem is due to Breuil and Emerton \cite[Theorem 1.1.3]{BreuilEmerton-Ordinary} and uses tools close to those of Gross, but in characteristic zero. Thus, it can be said that in both the mod $p$ or characteristic zero settings one finds that studying $p$-adic cohomology and rigid analytic geometry play a central role. While our proof takes place on a rigid analytic curve, the tools we use are quite different. We proceed in three steps:
\begin{enumerate}[(1)]
\item First, we view $\twid f$ in a rigid analytic family of finite slope eigenforms (the eigencurve $\cal C$) by constructing the point $x_{f,\crit}$.
\item Second, we relate the splitting behavior of the representation $\bar \rho_f$ at $p$ to the (lack of) infinitesimal varation of $\cal C$ relative to $\cal W$ near $x_{f,\crit}$ via the deformation theory of Galois representations.
\item Finally, we explain how the (lack of) infinitesimal variation of the weight is related to companion points via the geometry of the eigencurve over the weight space.
\end{enumerate}
The point (1) is the construction of the eigencurve \cite{ColemanMazur-Eigencurve}, \cite{Buzzard-Eigenvarieties}. If we wish to avoid modular curves and cohomology then we can substitute ideas of Stevens, exposed in \cite[\S 3]{Bellaiche-CriticalpadicLfunctions}, for a construction using overconvergent modular symbols. The key in (2) is essentially a result of Kisin \cite{Kisin-OverconvergentModularForms} on the analytic continuation of crystalline periods and its extension to infinitesimal deformations implicit in \emph{loc. cit.} and explicit in Bella\"iche-Chenevier \cite{BellaicheChenevier-EisensteinSmoothness, BellaicheChenevier-Book}. The point (3) is probably well known, though we make small remarks in the case that $r > 0$.

The aware reader might note that the result in \cite{BreuilEmerton-Ordinary} is slightly more precise than ours in a subtle way which we now will make clear. To fix some terminology, let us say that $\twid f$ has a companion form if there exists a $g$ such that $\theta^{k-1}g = \twid f$. Working with the notation as above, one could consider the following three properties:
\begin{enumerate}
\item $\bar \rho_f$ is split at $p$;
\item $x_{f,\crit}$ has a companion point;
\item $\twid f$ has a companion form.
\end{enumerate}
Among these three, we prove that (a) and (b) are equivalent whereas \cite[Theorem 1.1.3]{BreuilEmerton-Ordinary} says that (a) and (c) are the same. While (c) evidently implies (b), the converse is unclear. However, the \emph{a priori} difference between (b) and (c) is only an issue at primes $\ell \dvd N$ and not important in practice. For example, as far as the application to the $p$-adic local Langlands correspondence in \cite[\S 5]{BreuilEmerton-Ordinary} is concerned, one needs to only know that (b) and (a) are equivalent (see \cite[Proposition 5.4.4]{BreuilEmerton-Ordinary}). One could also directly deduce (c) from (b) if $N = 1$, or by constructing eigencurves using the Hecke operators $(U_\ell)_{\ell \dvd N}$, or by assuming that $H^1_g(G_{\Q},\ad \bar \rho_f) = (0)$ (which is conjectured to always be true) and using the techniques of \cite{Bellaiche-CriticalpadicLfunctions} to see that certain eigenspaces are one-dimensional (see Theorem 4, \emph{loc. cit.}).

Finally, as we mentioned in the previous paragraph, Theorem \ref{thm:maintheorem} has already been applied to local-global compatibility in the $p$-adic local Langlands correspondence for $\GL_2(\Q_p)$. One of the reasons for giving this exposition is that the arguments given here can be generalized to relate the splitting behavior of certain automorphic Galois representations at $p$ to the ramification of definite unitary eigenvarieties over weight spaces, and to the theory of ``companion forms'' \cite{Jones-LocallyAnalytic}, \cite[\S 4]{Chenevier-InfiniteFern} in this setting. This is the focus of future work. It is possible that this, in turn, could play a role in verifying recent conjectures \cite{BreuilHerzig-FundamentalAlgebraic} for local-global compatibility in the (still undefined!) $p$-adic Langlands program for $\GL_n(\Q_p)$.

\section{Proof of Theorem}\label{sec:proof}

\subsection{The eigencurve and construction of $x_{f,\crit}$} Let $N$ be a positive integer and $p$ a prime number such that $p \ndvd N$. We will be interested in the tame level $N$ $p$-adic eigencurve $\cal C(N) = \cal C$. Let $\Gamma := \Gamma_1(N) \intersect \Gamma_0(p)$ and we will recall what $\cal C$ is. First, we denote by $\cal W$ the $p$-adic weight space  $\cal W := \Hom_{\oper{grp-cont}}(\Z_p^\x,\G_m)$. This is a rigid analytic space and the $\C_p$-points are a disjoint union of $p-1$ open unit discs in $\C_p$. Fix an affinoid subdomain $W = \Sp(R) \ci \cal W$ and a real number $\nu\geq 0$. The inclusion $W \ci \cal W$ gives a tautological character $\kappa_R^{\sharp}: \Z_p^\x \goto R^\x$. Coleman \cite{Coleman-ClassicalandOverconvergent} has constructed a finite projective module $M_{R, \leq \nu}^{\dagger}(\Gamma)$ consisting of overconvergent  $p$-adic modular forms of weight $\kappa_R^\sharp$ over $R$ and slope at most $\nu$ for the group $\Gamma$. In the case that $R = L$ is a field then $\kappa^\sharp = \kappa_L^\sharp: \Z_p^\x \goto L^\x$ is a Serre weight and we will denote the module $M_{L,\leq \nu}^\dagger(\Gamma)$ by $M_{\kappa^\sharp,\leq \nu}^\dagger(\Gamma)$. There are also cuspidal version $S^\dagger$ of each of the modules $M^\dagger$.

Let $\bf T_{R,\leq \nu}$ be the subalgebra of $\End_R(M^\dagger_{R,\leq \nu}(\Gamma))$ generated by the image of the Hecke algebra $\cal H = \Z[(T_\ell)_{\ell \ndvd Np},U_p,\diamond{a}]$ for the group $\Gamma$. This is an finite $R$-algebra (thus a $\Q_p$-affinoid) and we let $\cal C_{W,\leq \nu} = \Sp(\bf T_{R,\leq \nu})$ be the associated rigid analytic space in the sense of Tate.  The points are in bijection with the closed points of $\Spec(\bf T_{R,\leq\nu})$, though we give an explicit description of some points below. We remark as well that $\cal C_{W,\leq \nu}$ is reduced (see \cite[Theorem 3.30]{Bellaiche-CriticalpadicLfunctions}). The structure map $R \goto \bf T_{R,\leq \nu}$ defines a map $\kappa: \cal C_{W,\leq \nu} \goto W$ which we call the weight map. As pointed out earlier, $\kappa$ is finite and flat. 

The eigencurve $\cal C$ together with the weight map $\kappa: \cal C\goto \cal W$ is constructed via a gluing process which is quite delicate in general. The case $N = 1$ is is carried out in \cite{ColemanMazur-Eigencurve} and the case $N > 1$  in \cite{Buzzard-Eigenvarieties}. Since our questions here are local in nature on $\cal C$ (and $\cal W$) we will content ourselves with the above summary. However, to make notation easier and supress inconsequential choices, we work with the global object $\cal C$. Replacing $M^\dagger$ by the cuspidal version $S^\dagger$ one could also construct local spaces $\cal C^0_{W,\leq \nu}$ whose points correspond to systems of Hecke eigenvalues appearing on the modules $S^{\dagger}_{R,\leq \nu}(\Gamma)$ of cuspidal overconvergent modular forms. Let $\cal C^0 \ci \cal C$ be the corresponding global object. To make precise our normalization in all of this, let us declare that if $x \in \cal C(L)$ is a system of eigenvalues appearing in a space of cuspforms $S_k^\dagger(\Gamma)$ then $\kappa(x) = z^{k} \in \Z \ci \cal W(L)$. With this normalization one can think of the element in the weight space as corresponding, up to a shift by $-1$, to the Hodge-Tate weight of the point (cf. Lemma \ref{lemma:wtvssenwt}). We have the following explicit description of the local cuspidal pieces of the eigencurve.

\begin{proposition}\label{thm:eigencurve}\label{prop:eigencurve}
Let $L$ be a field and $\kappa^\sharp \in W(L)$. There is a bijection
\begin{equation*}
\xymatrix{
x \ar@{|->}[d] & \kappa^{-1}(\kappa^{\sharp}) = \set{x \in \cal C^0_{W,\leq \nu}(L) \st \kappa(x) = \kappa^\sharp}\ar@{<->}[d]\\
T\mapsto T(x):\cal H \goto L & \set{\text{systems of eigenvalues of $\cal H$ appearing in $S_{\kappa^\sharp,\leq \nu}^{\dagger}(L)$}}.
}
\end{equation*}
\end{proposition}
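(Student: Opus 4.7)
The plan is to unwind definitions: a point $x \in \cal C^0_{W,\leq \nu}(L)$ is exactly an $L$-algebra character $\chi_x : \bf T^0_{R,\leq \nu} \to L$, where $\bf T^0_{R,\leq \nu}$ denotes the cuspidal analogue of $\bf T_{R,\leq \nu}$ (i.e.\ the $R$-subalgebra of $\End_R(S^\dagger_{R,\leq \nu}(\Gamma))$ generated by $\cal H$), and the condition $\kappa(x) = \kappa^\sharp$ says precisely that the composition $R \to \bf T^0_{R,\leq \nu} \to L$ (structure map followed by $\chi_x$) equals $\kappa^\sharp$. The candidate bijection sends $x$ to $\lambda_x := \chi_x|_{\cal H}$.

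Injectivity is formal: $\bf T^0_{R,\leq \nu}$ is by construction generated as an $R$-algebra by the image of $\cal H$, so $\chi_x$ is determined by $\chi_x|_R = \kappa^\sharp$ (which is forced) together with $\chi_x|_{\cal H} = \lambda_x$. For well-definedness (that $\lambda_x$ really appears in $S^\dagger_{\kappa^\sharp,\leq \nu}(L)$) and for surjectivity, the key observation is that $M := S^\dagger_{R,\leq \nu}(\Gamma)$ is a finite \emph{faithful} $\bf T^0_{R,\leq \nu}$-module: faithfulness is built into the definition of $\bf T^0_{R,\leq \nu}$ as a subalgebra of $\End_R(M)$, and finiteness over $\bf T^0_{R,\leq \nu}$ follows from finiteness over $R$. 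A finite faithful module over a Noetherian commutative ring has full support, so for $\mathfrak{m} := \ker \chi_x$ one has $M_\mathfrak{m} \ne 0$, hence $M/\mathfrak{m} M \ne 0$ by Nakayama.

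To finish I would invoke Coleman's specialization compatibility $M \otimes_R L \cong S^\dagger_{\kappa^\sharp,\leq \nu}(L)$ (where $L$ is the residue field at $\kappa^\sharp$). Since $\mathfrak{m}$ contains the kernel of $\kappa^\sharp$, this identifies $M/\mathfrak{m} M$ with a nonzero quotient of $S^\dagger_{\kappa^\sharp,\leq \nu}(L)$ on which $\cal H$ acts through $\lambda_x$, so $\lambda_x$ genuinely appears. Conversely, if $\lambda : \cal H \to L$ appears, then letting $\bar{\bf T} \subseteq \End_L(S^\dagger_{\kappa^\sharp,\leq \nu}(L))$ be the image of $\bf T^0_{R,\leq \nu} \otimes_R L$ under the specialization action, $\lambda$ supplies a character $\bar{\bf T} \to L$ (the character by which $\bar{\bf T}$ acts on any $\lambda$-eigenvector), which pulls back to a character of $\bf T^0_{R,\leq \nu}$ and hence to a point $x$ in the fiber. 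The only non-formal ingredient in this plan is the specialization isomorphism $M \otimes_R L \cong S^\dagger_{\kappa^\sharp,\leq \nu}(L)$, which is part of Coleman's original construction of $M$ via Serre's theory of completely continuous operators; everything else is generalities on finite faithful modules over Noetherian rings.
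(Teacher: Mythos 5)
The paper states this proposition without proof---it is presented as part of the recollection of Coleman's construction and the eigencurve formalism, with \cite{ColemanMazur-Eigencurve}, \cite{Buzzard-Eigenvarieties} as the references---so there is no proof in the paper to compare against. Your argument is the correct standard one and fills in exactly what is implicit there. The structure is right: a point of $\Sp(\bf T^0_{R,\leq\nu})$ over $L$ is an $L$-algebra character of the Hecke algebra, and since that algebra is generated over $R$ by $\cal H$, such a character in the fiber over $\kappa^\sharp$ is determined by its restriction to $\cal H$, giving injectivity. Faithfulness of $S^\dagger_{R,\leq\nu}(\Gamma)$ over $\bf T^0_{R,\leq\nu}$ is built in (it is a subalgebra of the endomorphism ring), finiteness over the Hecke algebra follows from finiteness over $R$, and full support plus Nakayama give the nonvanishing of the fiber module. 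The one external input you correctly isolate is the base-change compatibility $M\otimes_R L \cong S^\dagger_{\kappa^\sharp,\leq\nu}(L)$, which holds precisely because Coleman's $(W,\nu)$ are chosen so the slope-$\leq\nu$ projector splits off a finite projective $R$-module and commutes with specialization; this is part of the control theorem and is exactly what makes the local pieces of the eigencurve ``see'' the fiberwise spaces of overconvergent forms. Two small points worth flagging: the identification of the generalized eigenspace character with a point requires that $\bar{\bf T}$ act on some $\lambda$-generalized eigenvector through $\lambda$ alone, which is automatic since $\bar{\bf T}$ is commutative and the eigenvector is a simultaneous eigenvector; and one should say explicitly that the map lands in systems of eigenvalues (not merely generalized eigensystems), which your Nakayama step does give once you note that $M/\mathfrak{m}M$ carries an actual eigenvector because $\bf T^0_{R,\leq\nu}/\mathfrak{m}$ acts through $L$.
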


We now recall the construction of many points attached to classical modular forms. We include the proof of the following proposition for the convenience of the reader since it includes the specification of the form $\twid f$ and thus what will become the point $x_{f,\crit}$.

\begin{proposition}\label{prop:classicalconstruction}
Fix a newform $f \in S_k(\Gamma_1(Np^r),\ge)$. Write the $q$-expansion at infinity as $f = \sum_{n\geq 1} a_n(f) q^n$. Suppose that $a_p(f) \neq 0$ and $r = v_p(\cond \ge)$. Decompose $\ge=\ge_p\ge^p$ into its $p$-part $\ge_p$ and its prime-to-$p$ part $\ge^p$. Then, there exists two points $x_{1,f}$ and $x_{2,f}$ on $\cal C^0$ such that
\begin{equation*}
 T_\ell(x_{1,f}) = T_\ell(x_{2,f})\ge_p(\ell) \;\;\;\; \text{(if $\ell \ndvd Np$)}\\
\end{equation*}
Furthermore,
\begin{enumerate}
 \item If $r  = 0$ then $\set{U_p(x_{1,f}),U_p(x_{2,f})}$ are the two roots of the Hecke polynomial $X^2 - a_p(f)X + p^{k-1}\ge(p)$ attached to $f$.
 \item If $r > 0$ then $U_p(x_{1,f}) = a_p(f)$ and $U_p(x_{2,f}) = p^{k-1}\ge^p(p)/a_p(f)$.
\end{enumerate}
In any case, $U_p(x_{1,f})U_p(x_{2,f}) = p^{k-1}\ge^p(p)$.
\end{proposition}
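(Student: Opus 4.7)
The plan is to apply Proposition~\ref{prop:eigencurve}: it suffices to exhibit, for each $i \in \set{1,2}$, a classical cuspidal eigenform inside some overconvergent space $S^\dagger_{\kappa^\sharp}(\Gamma)$ of level $\Gamma = \Gamma_1(N)\intersect\Gamma_0(p)$ whose Hecke eigensystem realizes the desired $x_{i,f}$ and whose $U_p$-eigenvalue is nonzero. The argument then splits on whether the $p$-part $\ge_p$ of the nebentypus is trivial.

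When $r=0$ I would use the standard $p$-stabilization. Since $r = v_p(\cond\ge) = 0$ forces $\ge(p)$ to be a unit, the roots $\alpha,\beta$ of the Hecke polynomial $X^2 - a_p(f)X + p^{k-1}\ge(p)$ are both nonzero. The classical $p$-stabilizations $f_\alpha(z) := f(z) - \beta f(pz)$ and $f_\beta(z) := f(z) - \alpha f(pz)$ lie in $S_k(\Gamma)$, share the $T_\ell$-eigenvalues $a_\ell(f)$ for $\ell\ndvd Np$, and are eigenforms for $U_p$ with eigenvalues $\alpha$ and $\beta$. With $\ge_p$ trivial, the condition $T_\ell(x_{1,f}) = T_\ell(x_{2,f})\ge_p(\ell)$ collapses to the equality of shared eigenvalues, and $\alpha\beta = p^{k-1}\ge(p) = p^{k-1}\ge^p(p)$ gives the product formula.

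When $r>0$ I would take $x_{1,f}$ to be the system attached to $f$ itself: because the $p$-adic valuation of the level of $f$ equals $v_p(\cond\ge)$, the standard newform fact at primes in the level gives $U_p f = a_p(f) f$, and viewing $f$ as an overconvergent form of weight $z^k \ge_p \in \cal W$ and level $\Gamma$ produces a finite-slope point of $\cal C^0$. For $x_{2,f}$ I would take the newform $g$ attached to the automorphic representation $\pi_f \otimes \ge_p^{-1}$: at each $\ell\ndvd p$ one has $a_\ell(g) = \ge_p(\ell)^{-1} a_\ell(f)$, and the Atkin-Lehner identity at $p$ yields $a_p(g) = p^{k-1}\ge^p(p)/a_p(f)$. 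The hypothesis $a_p(f) \neq 0$ ensures both points are of finite slope, and the stated Hecke relations and the product formula then follow by direct substitution.

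The main obstacle I would expect is verifying the Atkin-Lehner identity $a_p(f) a_p(g) = p^{k-1}\ge^p(p)$ when $r>0$. This is a local fact at $p$: the local component $\pi_{f,p}$ is a ramified principal series $\pi(\chi_1,\chi_2)$ whose unramified character records $a_p(f)$ and whose ramified character satisfies $\restrict{\chi_2}{\Z_p^\x} = \ge_p$; twisting by $\ge_p^{-1}$ interchanges the roles of the two characters and produces a newform whose unramified $U_p$-eigenvalue is precisely $p^{k-1}\ge^p(p)/a_p(f)$ once the local-global dictionary is normalized. Pinning down that normalization is the one place where care is required.
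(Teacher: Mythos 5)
Your proposal is correct and follows essentially the same route as the paper. For $r=0$ the $p$-stabilization argument is identical. For $r>0$ you realize $x_{1,f}$ by viewing $f$ itself as an overconvergent form of weight $z^k\ge_p$ over $\Gamma$, exactly as the paper does (via the canonical-subgroup identification of finite-slope spaces). For $x_{2,f}$ you twist the automorphic representation by $\ge_p^{-1}$ and read off the $U_p$-eigenvalue from the local component $\pi_{f,p}=\pi(\chi_1,\chi_2)$, which is the representation-theoretic translation of what the paper does with the Atkin-Lehner operator $w_{p^r}$ on $S_k(\Gamma_1(Np^r),\ge)\overset{\iso}{\to} S_k(\Gamma_1(Np^r),\ge^p\ge_p^{-1})$; since $\cond\ge_p=p^r$ and $a_p(f)\neq 0$, the local component is a ramified principal series and $w_{p^r}$ agrees (up to a scalar) with the twist, so the two descriptions coincide. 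The paper itself flags this equivalence by pointing to Breuil--Emerton for ``the representation-theoretic description,'' and it is precisely the hypothesis $r=v_p(\cond\ge)$ that makes the Atkin--Lehner identity $a_p(f)\,a_p(\twid f)=p^{k-1}\ge^p(p)$ hold, which you correctly single out as the step needing care.
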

\begin{proof}
 The case of $r = 0$ goes back at least to \cite{Mazur-padicvariation}. Let $X^2 - a_pX + p^{k-1}\ge(p)$ be the characteristic polynomial for the operator $T_p$. It has two roots $\alpha$ and $\beta$ in $\bar \Q$. Then, the two forms
\begin{align*}
f_\alpha(z) &:= f(z) - \beta f(pz)\\
f_\beta(z) &:= f(z) - \alpha f(pz)
\end{align*}
are eigenforms on the group $\Gamma$. One immediately checks that system of eigenvalues determined by $f_\alpha$ and $f_\beta$ is the same as $f$ away from $p$. At $p$,  $U_p f_\alpha = \alpha f_\alpha$ and $U_p f_\beta = \beta f_\beta$. The nebentypus of $f$, which is trivial at $p$, remains unchanged. These are our two points $x_{1,f}$ and $x_{2,f}$. 

Now assume that $r > 0$. View $\ge_p$ as a character of $\Z_p^\x$ and let $\kappa^\sharp(z) = z^{k}\ge_p(z)$ for $z \in \Z_p^\x$. Then we have an inclusion $S_k(\Gamma_1(Np^r),\ge) \ci S_{\kappa^\sharp}^\dagger(\Gamma_0(p^r)\intersect \Gamma_1(N),\ge^p)$ (notice the change in the weight). The theory of the canonical subgroup may be used to make an identification between spaces of Coleman's overconvergent forms
\begin{equation*}
S^\dagger_{\kappa^\sharp}(\Gamma_0(p^r)\intersect \Gamma_1(N))^{\text{fin.slope}} = S^\dagger_{\kappa^\sharp}(\Gamma)^{\text{fin.slope}}.
\end{equation*}
Thus, we see that the form $f$ produces a finite slope system of eigenvalues for the Hecke algebra acting on the space $S^\dagger_{\kappa^\sharp}(\Gamma)$. Call the corresponding point on $\cal C^0$ the point $x_{1,f}$. 

To produce the other point $x_{2,f}$ we use a technique we learned from \cite[\S 6]{Ghate-OrdinaryForms} (but see also \cite[\S 4]{BreuilEmerton-Ordinary} for the representation-theoretic description). The point is that for an integer $k$ there is an isomorphism
\begin{equation*}
w_{p^r}: S_k(\Gamma_1(Np^r),\ge) \overto{\iso} S_k(\Gamma_1(Np^r),\ge^p\ge_p^{-1})
\end{equation*}
given by an Atkin-Lehner operator $w_{p^r}$. It preserves newforms up to constants and the normalized newform $\twid f$ on the right hand side corresponding to $f$ has the system of Hecke eigenvalues
\begin{equation*}
 a_\ell(\twid f) = \begin{cases}
 	          p^{k-1}\ge^p(p)/a_p(f) & \text{if $\ell = p$},\\
               a_\ell(f)\ge_p^{-1}(\ell) & \text{if $\ell \neq p$.}
              \end{cases}
\end{equation*}
Here we used our assumption that $\ge$ was primitive modulo $p^r$. The system of eigenvalues corresponding to $\twid f$ then produces a point $x_{1,\twid f} =: x_{2,f}$ by the previous paragraph. The identities in the statement of the proposition follow.
\end{proof}
The construction has the following consequences for Galois representations. By constructions of Eichler-Shimura and Deligne we know that to a newform $f \in S_k(\Gamma_1(Np^r),\ge)$ we can associate a Galois representation $\bar \rho_f: G_{\Q,Np} \goto \GL_2(\bar \Q_p)$ such that if $\ell \ndvd Np$ then $\bar \rho_f$ is unramified at $\ell$ and $\bar \rho_f(\Frob_\ell)$ has characteristic polynomial $X^2 - a_\ell(f)X + \ell^{k-1}\ge(\ell)$. Below we describe how to interpolate these representations to get representations $\bar\rho_x$ attached to each point $x \in \cal C(\bar \Q_p)$ of the eigencurve. By looking at the first statement made in the above proposition we immediately deduce that $\bar \rho_{x_{1,f}}$ and $\bar \rho_{x_{2,f}}$ are equal up to a twist by a finite order character. In the case where $f$ has no level at $p$, $\bar \rho_{x_{1,f}} = \bar \rho_{x_{2,f}}$ on the nose.

Now suppose that $f$ is ordinary at $p$ and $r = v_p(\cond \ge)$. By definition then, $v_p(a_p) = 0$. Using the final statement of Proposition \ref{prop:classicalconstruction} we have points $x_{i,f}$ such that $v_p(U_p(x_{1,f})) + v_p(U_p(x_{2,f})) = k-1$. Without loss of generality $v_p(U_p(x_{1,f})) = 0$ (Proposition \ref{prop:classicalconstruction}(b) forces this if $r > 0$). Thus, $v_p(U_p(x_{2,f})) = k-1$ and we say $x_{2,f}$ is a point on $\cal C^0$ of \emph{critical slope}. In the notation of the proof of Proposition \ref{prop:classicalconstruction} we have that $\twid f = f_\beta$ in the case $r = 0$.

\begin{definition}
If $f$ is ordinary at $p$ and $r = v_p(\cond \ge)$ let $x_{f,\crit} \in \cal C^0$ be the point $x_{2,f}$ of critical slope constructed above.
\end{definition}

We now have all the information we need to understand Theorem \ref{thm:maintheorem}. We remark immediately that we've only constructed/defined the point $x_{f,\crit}$ for the $f$ above satisfying that $a_p \neq 0$, and if $r \geq 1$ then we had a condition the conductor of $\ge$. The ordinariness of the theorem subsumes the condition that $a_p \neq 0$. However, one could also have an ordinary form $f$ which satisfies that $r = 1$ but $\ge = \ge^p$. In this case, $\bar \rho_{f,p}$ will never be split and $x_{f,\crit}$ will never have a companion point so the theorem is a tautology (see the discussion in \cite[\S 6]{Ghate-OrdinaryForms}).

\subsection{Infinitesimal variation of the weight}
Up until now our setup is inspired heavily by the setups in \cite{Ghate-OrdinaryForms} and \cite{BreuilEmerton-Ordinary}. We've constructed $\twid f$ as they did but our departure point comes in the proof of Theorem \ref{thm:maintheorem}. It will follow from considering the infinitesimal deformation theory of the refined family of Galois representations on $\cal C$ in the sense of \cite[Ch. 4]{BellaicheChenevier-Book}. We quickly recall the main points. 

For now we let $x \in \cal C^0(\bar \Q_p)$ be a very\footnote{This terminology is due to Bella\" iche and will appear in the forthcoming book \cite{Bellaiche-EigenvarietyCourse}. The other points constructed in Proposition \ref{prop:classicalconstruction} are called Hida classical points in \emph{loc. cit}. Together they make up all the classical points.} classical point in the sense that $x$ is one of the points $\set{x_{i,f}}_{i=1,2}$ constructed in the $r = 0$ case of Proposition \ref{prop:classicalconstruction}. We denote by $\cal Z$ all of the very classical points. Let $\bar \rho_x$ be the Galois representation $\bar \rho_f = \bar \rho_x: G_{\Q,Np} \goto \GL_2(\bar \Q_p)$ attached to $f$. This is independent of $x = x_{i,f}$ by the remarks proceeding Proposition \ref{prop:classicalconstruction}. Since the levels are prime to $p$, all the representations $\set{\bar \rho_{x}}_{x \in \cal Z(\bar \Q_p)}$ are crystalline at $p$. Each of the representations $\bar \rho_x$ is absolutely irreducible. Furthermore, it is well known that $D^+_{\cris}(\bar \rho_x)^{\varphi = U_p(x)} \neq 0$ (see \cite[Theorem 1.2.4(ii)]{Scholl-MotivesForModularForms}).

For any such $x$ we have a Galois pseudocharacter $T_x: G_{\Q,Np} \goto \bar \Q_p$ given by $T_x(g) = \tr(\bar \rho_x(g))$. It follows from \cite[Prop. 7.1.1]{Chenevier-padicAutomorphicForm} that there is a unique pseudocharacter $T: G_{\Q,Np} \goto \cal O(\cal C)$ such that for any $x$ as before we have $\ev_x \compose T = T_x$. In particular, moving to a point $x \nin \cal Z$ then a theorem of Taylor \cite[Theorem 1(2)]{Taylor-Pseudoreps} gives us a unique semi-simple representation $\bar \rho_x: G_{\Q,Np} \goto \GL_2(\bar \Q_p)$ such that $\tr(\bar \rho_x) = \ev_x\compose T$. We refer to the representations $\set{\bar \rho_x}$, or equivalently the pseudocharacter $T$, as the family of Galois representations on $\cal C$. We pause here to describe the only ingredient left in seeing the eigencurve as a refined family in the sense of \cite[Ch. 4]{BellaicheChenevier-Book}. By the previous constructions and Sen's theory in families, there is an analytic function  $s: \cal C \goto \Af^1$ so that at a point $x \in \cal C(\bar \Q_p)$ the $p$-adic Sen weights of the representation $\bar \rho_x$ are given by $0$ and $s(x)$.

\begin{lemma}\label{lemma:wtvssenwt}
Let $\kappa: \cal C \goto \cal W$ be the weight map and $s: \cal C \goto \Af^1$ be the Sen map. We have a commuting diagram
\begin{equation*}
\xymatrix{
& \cal C  \ar[dl]_-{\kappa} \ar[dr]^-s \\
\cal W \ar[rr]_-{-1 + \log} & & \Af^1
}
\end{equation*}
In particular, if $x \in \cal C(\bar \Q_p)$ then the induced maps $d\kappa, ds : T_x\cal C \goto L(x)$ on tangent spaces are either both zero or both isomorphisms.
\end{lemma}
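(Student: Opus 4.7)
The proof plan has two movements: first establish the commutativity of the diagram, then deduce the tangent space dichotomy as a formal consequence.

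For commutativity, the plan is to argue by Zariski density of classical points. The very classical points $\cal Z \subset \cal C$ are Zariski-dense (this is standard for the eigencurve, and both $\kappa$ and $s$ are globally defined analytic functions, so it suffices to verify the identity $s = -1 + \log \circ \kappa$ pointwise on $\cal Z$). Fix $x = x_{i,f} \in \cal Z$ arising from a classical newform $f$ of weight $k \geq 2$ with $r = 0$. By the normalization recalled just before Proposition \ref{prop:eigencurve}, the image $\kappa(x) \in \cal W(L)$ is the character $z \mapsto z^{k}$, so $(\log \circ \kappa)(x) = k$. On the other hand, $\bar\rho_x$ is crystalline with Hodge-Tate weights $\{0, k-1\}$ (by Eichler-Shimura-Deligne and the normalization of $\bar\rho_f$ recalled in the introduction), so its Sen weights are $\{0, k-1\}$. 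Since $s$ was defined so that the Sen weights of $\bar\rho_x$ are $0$ and $s(x)$, this forces $s(x) = k - 1 = -1 + (\log \circ \kappa)(x)$, giving the desired identity on $\cal Z$ and hence on all of $\cal C$.

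For the tangent space claim, the key point is that the map $-1 + \log : \cal W \to \Af^1$ is étale, i.e., induces an isomorphism on tangent spaces at every point of $\cal W$. Indeed, $\cal W$ is a disjoint union of one-dimensional open unit discs and $\log: \cal W \to \Af^1$ is a local isomorphism on each component (its derivative with respect to the standard coordinate is nonvanishing). Granting this, the commutative diagram gives the factorization
\begin{equation*}
ds = d(-1 + \log)_{\kappa(x)} \circ d\kappa
\end{equation*}
on $T_x \cal C \to L(x)$, and since the first factor is an isomorphism of one-dimensional $L(x)$-vector spaces, $ds$ and $d\kappa$ have the same kernel. In particular, one is zero if and only if the other is, and one is an isomorphism if and only if the other is.

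The main obstacle, such as it is, is really a bookkeeping issue: confirming the normalization of Sen weights versus Hodge-Tate weights matches the convention chosen for the weight map (the "shift by $-1$" advertised in the paragraph preceding Proposition \ref{prop:eigencurve}). Once the conventions are pinned down at classical points, the rest is analytic continuation plus the observation that $\log$ is a local isomorphism on $\cal W$. I do not anticipate any serious difficulty beyond this matching of normalizations.
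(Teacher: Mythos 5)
Your proof takes essentially the same route as the paper: reduce the commutativity of the triangle to a check on the very classical points $\cal Z$ (the paper cites \cite[Lemma 7.5.12]{BellaicheChenevier-Book}, whereas you invoke Zariski density of $\cal Z$ directly, together with---implicitly---the reducedness of $\cal C$ noted earlier in the paper), and there match the normalization $\kappa(x) = z^k$ against the Sen weights $\{0,k-1\}$. Your explicit deduction of the tangent-space dichotomy from the fact that $-1+\log$ is \'etale on $\cal W$ is correct and simply makes visible what the paper leaves as immediate.
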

\begin{proof}
By \cite[Lemma 7.5.12]{BellaicheChenevier-Book} it is only necessary to check this on $\cal Z$. There, though, we know that the weight map is normalized so that $x \in \cal Z$ is mapped to the character $z \mapsto z^{k}$. On the other hand, the Sen weights of $\bar \rho_x$ are $0$ and $k-1$.
\end{proof}

It is worth making a comment about the points $x_{i,f}$ of Proposition \ref{prop:classicalconstruction} where $r \geq 1$ since we excluded these points in $\cal Z$. So, let $f$ be a cuspidal newform of level $\Gamma_1(Np^r)$ with character $\ge = \ge_p\ge^p$. In the proof of Proposition \ref{prop:classicalconstruction} we found the system of Hecke eigenvalues attached to $f$ in the Hecke module $S^\dagger_{\kappa^\sharp}(\Gamma)$ where $\kappa^\sharp = z^{k}\ge_p$. The proof of the lemma in this case is that since $\ge_p$ is a finite order character, it vanishes under the logarithm. So, we see then that such points correspond to elements in $\cal C^0(\bar \Q_p)$ whose weights are not of the form $z^{\N}$ but whose Sen weights are integers.

Following the lemma, we can say that $(T,\kappa,\cal Z, (U_p,U_p^{-1}))$ gives the data of a family of refined Galois representations on $\cal C$ in the sense of \cite[Ch. 4]{BellaicheChenevier-Book}. In particular, their extension \cite[Lemma 6]{BellaicheChenevier-EisensteinSmoothness}, \cite[Theorem 4.3.2]{BellaicheChenevier-Book} of Kisin's result \cite[Theorem 6.3]{Kisin-OverconvergentModularForms} applies to our situation and we have the full analytic continuation of crystalline periods, which we now recall.

From now on, we let $x$ be a point corresponding to a classical cuspidal newform of weight $k$ (though not necessarily the $r = 0$ case). We will eventually apply what we say to $x = x_{f,\crit}$ and we invite the the reader to make that specializiation now. Since $k\neq 1$, the Sen weights of $\bar \rho_x$ are the distinct integers $0 < k-1$. We denote by $A_x := \cal O_{\cal C,x}$ the local ring of $\cal C$ at $x$. Its residue field we denote by $L(x)$. Since $\bar \rho_x$ is absolutely irreducible and $A_x$ is Henselian, the theorem of Nyssen-Rouquier \cite[Corollarie 5.2]{Rouqier-Jalgebra96-Pseudocharacters} implies that there is a unique two dimensional semisimple representation $\rho_x: G_{\Q,Np} \goto \GL_2(A_x)$ such that $\rho_x \tensor_{A_x} L(x) = \bar \rho_x$. If $I \ci A_x$ is an ideal of cofinite length then the analytic continuation of crystalline periods implies that $D_{\cris}^+(\restrict{\rho_x}{G_{\Q_p}} \tensor_{A_x} A_x/I)^{\varphi = U_p}$ is free of rank one over $A_x/I$.

If $\fr{AR}_{L(x)}$ is the category of local Artin $L(x)$-algebras with residue field $L(x)$ then we let $X_{\bar \rho_x}: \fr{AR}_{L(x)} \goto \Set$ be the formal deformation functor in the sense of \cite{Mazur-MSRI89-Deformations} for $\bar \rho_x$. Since $\bar \rho_x$ is absolutely irreducible, this functor is pro-representable by a complete local noetherian ring $R_x$ with residue field $L(x)$. We have as well the functor $X_{\bar \rho_{x,p}}$ where $\bar \rho_{x,p} := \restrict{\bar \rho_x}{G_{\Q_p}}$. Notice that in the situation of Theorem \ref{thm:maintheorem}, $\bar \rho_{x,p}$ will be split and thus $X_{\bar \rho_{x,p}}$ will not be representable. Nevertheless, we consider the subfunctor $X_{\bar \rho_{x,p}}^{U_p(x)} \ci X_{\bar \rho_{x,p}}$ first appearing in \cite[\S 8]{Kisin-OverconvergentModularForms}, defined as
\begin{equation*}
X_{\bar \rho_{x,p}}^{U_p(x)} = \set{\rho_A \in X_{\bar \rho_x,p}(A) \st D_{\cris}^+(\rho)^{\varphi = U} \text{ is free of rank one for some $U \congruent U_p(x) \bmod \ideal m_A$}}.
\end{equation*}
By \cite[Prop. 8.13]{Kisin-OverconvergentModularForms} the functor $X_{\bar \rho_{x,p}}^{U_p(x)}$ is relatively representable over $X_{\bar \rho_{x,p}}$ and thus the fibered product
\begin{equation*}
X_{\bar \rho_x}^{U_p(x)} := X_{\bar \rho_x} \x_{X_{\bar \rho_{x,p}}} X_{\bar \rho_{x,p}}^{U_p(x)}
\end{equation*}
is pro-representable by a complete local noetherian ring $R_x^{U_p(x)}$ with residue field $L(x)$ and which is a quotient of $R_x$.

The representation $\rho_x$ appearing on the local ring of the eigencurve defines a map $R_x \goto A_x$ and by the theorem on analytic conintuation of crystalline periods it must factor through the quotient $R_{x}^{U_p(x)}$. Since $A_x$ is generated by the Hecke operators (including $U_p$) and the weight map, $A_x$ is a quotient of $R_x^{U_p(x)}$.

Let $T_x$ denote the Zariski tangent space $X_{\bar \rho_x}(L(x)[\ge])$ associated to the deformation functor $\bar \rho_x$. It has a subspace $T_x^{U_p(x)} = X^{U_p(x)}_{\bar \rho_x}(L(x)[\ge])$ corrsponding to the functor $X_{\bar \rho_x}^{U_p(x)}$. If $\twid x \in T_x$ is a point we let $\rho_{\twid x}: G_{\Q,Np} \goto \GL_2(L(x)[\ge])$ be the corresponding deformation. The following proposition details the infinitesimal variation of at least one of the weights.

\begin{proposition}\label{prop:constantweightprop}
Suppose that the line $D_{\cris}^+(\bar \rho_x)^{\varphi = U_p(x)} \ci D_{\cris}^+(\bar \rho_x)$ has induced Hodge-Tate weight $b \geq 0$. Then $b$ is a constant Sen weight of $\rho_{\twid x}$ for any $\twid x \in T_x^{U_p(x)}$. 
\end{proposition}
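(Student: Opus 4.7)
The plan is to prove the proposition by Sen theory applied to the Artinian deformation, using the crystalline period to exhibit an explicit $G_{\Q_p}$-invariant line in the $\C_p$-base change of $\rho_{\tilde x}$. Let $A := L(x)[\ge]$, $V := \restrict{\rho_{\tilde x}}{G_{\Q_p}}$ (an $A$-linear representation), and let $D \subset D^+_{\cris}(V)$ be the free rank-one $A$-submodule given by $\tilde x \in T_x^{U_p(x)}$, so that $\varphi$ acts on $D$ by some $U \equiv U_p(x) \bmod \ge$, lifting the line $D^+_{\cris}(\bar\rho_x)^{\varphi = U_p(x)}$.

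The first main step is to extract a Hodge-Tate datum from $D$. Viewing a generator $d \in D$ inside $(B^+_{dR} \tensor_{\Q_p} V)^{G_{\Q_p}}$, the hypothesis that the line on the special fiber has Hodge-Tate weight $b$ says that $\bar d \in t^b(B^+_{dR} \tensor \bar\rho_{x,p})^{G_{\Q_p}}$ but has nonzero image in $(\C_p(b) \tensor \bar\rho_{x,p})^{G_{\Q_p}}$. Because $D$ is $A$-free of rank one and $A$ is local Artinian, Nakayama lets me replace $d$ by an $A$-generator still lying in $t^b(B^+_{dR} \tensor V)^{G_{\Q_p}}$ whose image in $(\C_p(b) \tensor V)^{G_{\Q_p}}$ generates a free rank-one $(A \tensor_{\Q_p} \C_p)$-submodule. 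Equivalently, $V \tensor_{\Q_p} \C_p$ contains a $G_{\Q_p}$-stable, $A \tensor \C_p$-free rank-one submodule on which $G_{\Q_p}$ acts through the character $\chi_p^{-b}$ (up to the usual sign convention).

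The second main step is to conclude from Sen theory in the Artinian setting that such a line forces $b$ to be one of the two Sen weights of $V$. Since the special fiber $\bar\rho_{x,p}$ has distinct integer Sen weights $0$ and $k-1$, Hensel lifting in Sen theory (as formalized in, e.g., [BellaicheChenevier-Book, Ch.~2] and used in their analytic continuation lemmas) shows that the Sen weights of $V$ lie in $A$ and split as two well-defined elements $\alpha, \beta \in A$ reducing to $0$ and $k-1$. An invariant $A \tensor \C_p$-line of Hodge-Tate type $b$ is precisely an eigenline of the Sen operator with eigenvalue $b$, so $b$ must equal one of $\alpha, \beta$; and since $b \in \Z \ci L(x) \ci A$, this Sen weight is constant.

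The main obstacle is the passage from the existence of the crystalline period over $A$ to the assertion that $b$ appears as a Sen eigenvalue in $A$, which requires the Artinian version of Sen's theorem — in particular the freeness/splitting of the Sen module in the deformation. This is essentially the ingredient underlying the analytic continuation of crystalline periods of Kisin and Bellaïche-Chenevier, so I would cite those results rather than redeveloping the Sen formalism. Everything else (Nakayama to lift the Hodge-Tate datum from the special fiber, and the identification of trivial $G_{\Q_p}$-action on an $A \tensor \C_p$-line with an integer Sen weight) is routine.
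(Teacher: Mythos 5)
The proposal takes a genuinely different route from the paper and, as written, has a gap at its crucial step.

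The paper's proof bifurcates. For $b = 0$, it cites [BellaicheChenevier-Book, Prop.~2.5.4] directly. For $b \neq 0$ it first notes that $\bar\rho_{x,p}$ is then forced to be split, writes $\bar\rho_{x,p} = \chi_1 \oplus \chi_2$, decomposes $H^1(G_{\Q_p},\ad\bar\rho_{x,p})$ into $\Ext^1(\chi_i,\chi_j)$ blocks, pushes this through Sen's functor to kill the off-diagonal blocks, and then observes that the crystalline condition forces the $(2,2)$-block to die under the Sen map. Your proposal avoids this split entirely and tries to argue uniformly: take the free rank-one $D = D^+_{\cris}(V)^{\varphi=U}$, exhibit a weight-$b$ line in the Sen module, and conclude $b$ is a constant Sen weight.

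The gap is in your ``Nakayama'' step. You claim that because $D$ is free of rank one over $A = L(x)[\ge]$ and $\bar d \in t^b(B^+_{dR}\otimes\bar V)^{G_{\Q_p}}$, one can choose an $A$-generator $d$ of $D$ lying in $t^b(B^+_{dR}\otimes V)^{G_{\Q_p}}$. That is not a Nakayama consequence. Write $d = d_0 + \ge d_1$ with $d_0 = \bar d$; any other generator is $a d = a_0 d_0 + \ge(a_0 d_1 + a_1 d_0)$ for $a = a_0 + a_1\ge \in A^\times$. Changing the generator only alters the $\ge$-component by a multiple of $\bar d$, so $ad \in t^b$ iff $d_1 \in t^b$ — exactly what one does not know a priori. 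Concretely, $\Fil^b D$ is an $A$-submodule of the cyclic module $D$, hence is $D$ or is contained in $\ge D$, and $\ge D \subset \Fil^b D$ always holds (since $\ge d$ reduces to $0$ and its only component is $\ge\bar d \in t^b$); but Nakayama cannot rule out $\Fil^b D = \ge D$. Deciding between these two is the substantive content of the proposition, and it is precisely what requires a Hodge-theoretic input. For $b = 0$ there is nothing to show ($\Fil^0 = $ everything), which is why [BellaicheChenevier-Book, Prop.~2.5.4] handles that case cleanly; for $b > 0$ you need an additional argument, and the reference you gesture at (analytic continuation of crystalline periods) gives the \emph{freeness} of $D$ over $A$, not the lifting of the filtration jump. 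The second and third steps of your proposal (passing from a $t^b$-period to an $A\otimes\C_p$-line in the Sen module, and invoking Artinian Sen theory to read off a constant Sen weight) are fine, but they are downstream of the step that is missing. To repair the argument in the $b > 0$ case you would in effect need to use that $\bar\rho_{x,p}$ is split — which is what the paper does, and which suggests the bifurcation is not an artifact.
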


\begin{remark}
Notice that \emph{a priori} $\rho_{\twid x}$ has a Sen weight of the form $b  + b(\twid x)\ge$. By constant weight we mean that $b(\twid x) = 0$.
\end{remark}
\begin{proof}
Suppose that $D_{\cris}^+(\bar \rho_{x,p})^{\varphi = U_p(x)}$ has Hodge-Tate weight 0. In this case, it follows from \cite[Proposition 2.5.4]{BellaicheChenevier-Book} that for any $A$, any deformation in $X_{\bar \rho_x}^{U_p(x)}(A)$ will have a Sen weight zero.

If $b \neq 0$ then $\bar \rho_{x,p}$ must be split because it is potentially semi-stable. Unlike before, we really must work only at the level of tangent spaces. The proof follows the key compuation in \cite[Theorem 2.6]{Bellaiche-CriticalpadicLfunctions}, but we include it for the convenience of the reader. The Zariski $T_x$ tangent space is well known to be identified with the cohomology space $H^1(G_{\Q,Np},\ad \bar \rho_x)$ and the restriction map defines a morphism $T_x \goto H^1(G_{\Q_p},\ad \rho_{x,p})$. Since the Sen weights only depend on $\rho_{\twid x, p}$ it suffices to consider its image there.

Let us write $\bar \rho_{x,p} = \chi_1 \dsum \chi_2$ with $\chi_2$ crystalline of weight $b$ and $D_{\cris}^+(\chi_2)^{\varphi = U_p(x)} \neq 0$. We have a decomposition
\begin{equation*}
H^1(G_{\Q_p},\ad \bar \rho_{x,p}) = \bigdsum_{i,j=1}^2 H^1(G_{\Q_p},\chi_i\chi_j^{-1}) = \bigdsum_{i,j=1}^2 \Ext^1_{L(x)[G_{\Q_p}]}(\chi_i,\chi_j).
\end{equation*}
On the other hand, if $D_{\sen}$ is Sen's functor then we have 
\begin{equation*}
\xymatrix{
H^1(G_{\Q_p},\ad \bar \rho_{x,p}) \ar@{=}[d] \ar[r]^-{D_{\sen}} & H^1(\Gamma_\infty,\ad D_{\sen}(\bar \rho_{x,p})) \ar@{=}[d]\\
\bigdsum_{i,j=1}^2 \Ext^1_{L(x)[G_{\Q_p}]}(\chi_i,\chi_j) \ar[r]^-{\dsum D_{\sen}} & \bigdsum_{i,j=1}^2 \Ext^1_{L(x)[\Gamma_\infty]}(D_{\sen}(\chi_i),D_{\sen}(\chi_j))
}
\end{equation*}
where $\Gamma_\infty$ is the $p$-cyclotomic quotient of $G_{\Q_p}$. Since $\chi_1$ and $\chi_2$ have distinct weights, an easy computation shows that if $i \neq j$ then the $(i,j)$ term in the lower right sum disappears. Furthermore the weight of $\chi_i$ is constant in a deformation $\rho_{\twid x,p}$ if and only if its image in $\Ext^1_{L(x)[\Gamma_\infty]}(D_{\sen}(\chi_i),D_{\sen}(\chi_i))$ is zero. We now proceed to show this for $i = 2$.

We consider the left vertical equality above. If $\twid x \in T_x^{U_p(x)}$ then it follows that the image of $\twid x$ under the map
\begin{equation*}
H^1(G_{\Q_p},\ad \bar \rho_{x,p}) \goto \Ext^1_{L(x)[G_{\Q_p}]}(\chi_2,\chi_2)
\end{equation*}
lands inside the subspace $\Ext^{1}_{L(x)[G_{\Q_p}],\cris}(\chi_2,\chi_2)$ consisting of cystalline $L(x)[\ge]$-valued characters deforming $\chi_2$. In particular, since such a character is Hodge-Tate it vanishes under the composition of the bottom horizontal map. Thus we have shown our claim.
\end{proof}

\subsection{Ramification above the weight space and companion points}

The final ingredient in the proof of Theorem \ref{thm:maintheorem} is the relationship between between Coleman's $\theta$-operator and companion points. Recall that while the operator $\theta:= q\der /q$ acting on $p$-adic modular forms will in general not preserve overconvergence it does have the property that $\theta^{k-1}$ will take a finite slope overconvergent form of weight $2-k$ to a finite slope overconvergent form of weight $k$. Relevant for us is Coleman's study \cite{Coleman-ClassicalandOverconvergent, Coleman-OverconvergentModularFormsOfHigherLevel} of the relationship between the cokernel $M_k^\dagger(\Gamma_1(Np^r))/\theta^{k-1}M^\dagger_{2-k}(\Gamma_1(Np^r))$  and spaces of classical modular forms. 

If $r \geq 1$ denote by $S_k^{\prim}(\Gamma_1(Np^r))$ the subspace of $S_k(\Gamma_1(Np^r))$ spanned by forms of level $\Gamma_1(Np^{r'})$ with $r' \leq r$ and whose nebentypus at $p$ has conductor exactly $p^{r'}$. If $r = 0$ we define $S_k^{\prim}(\Gamma_1(N))$ to be the subspace of $S_k(\Gamma_1(Np))$ spanned by the $p$-old forms. Following Coleman, we denote by $S_k^{\dagger,0}(\Gamma_1(Np^r))$ the subspace of $S_k^{\dagger}(\Gamma_1(Np^r))$ consisting of overconvergent forms which have trivial residues on each supersingular annulus. If $V$ is a vector space together with a linear action of $U_p$, we let $V^{\nu = a}$ denote the $U_p$-slope $a$ subspace.

\begin{proposition}\label{prop:control}
For any $0\leq a \leq k-1$ there is a short exact sequence of Hecke modules
\begin{equation*}
0 \goto \left( M_{2-k}^{\dagger}(\Gamma_1(Np^r))({\det}^{k-1})\right)^{\nu = a}\overto{\theta^{k-1}}S_k^{\dagger,0}(\Gamma_1(Np^r))^{\nu = a} \goto S_k^{\prim}(\Gamma_1(Np^r))^{\nu = a} \goto 0.
\end{equation*}
\end{proposition}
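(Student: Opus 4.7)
The proposition is essentially a slope-restricted repackaging of Coleman's theorems on the $\theta$-operator and on classicality of finite-slope overconvergent forms (see \cite{Coleman-ClassicalandOverconvergent, Coleman-OverconvergentModularFormsOfHigherLevel}). The plan is to verify exactness at each of the three positions in the sequence.

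For injectivity of $\theta^{k-1}$: on $q$-expansions the operator acts by $\sum a_n q^n \mapsto \sum n^{k-1} a_n q^n$, which is manifestly injective, and overconvergent forms inject into their $q$-expansions at $\infty$. The slope match comes from the commutation $U_p \theta^{k-1} = p^{k-1} \theta^{k-1} U_p$: a form of $U_p$-slope $a - (k-1)$ in $M^\dagger_{2-k}$ is sent to one of slope $a$ in $S_k^\dagger$, and the twist by $\det^{k-1}$ rescales $U_p$ by $p^{k-1}$, so that the slope-$a$ subspace of the twisted source maps precisely to the slope-$a$ subspace of the target. That the image lies in $S_k^{\dagger,0}$ is Coleman's residue computation: after the appropriate weight normalization, $\theta^{k-1}f$ represents an exact differential on each supersingular annulus, so its residue vanishes.

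The core step is identifying the cokernel at slope $a \leq k-1$ with $S_k^{\prim}(\Gamma_1(Np^r))^{\nu = a}$. For $r = 0$, this is the content of Coleman's classicality theorem: every overconvergent cuspform of weight $k$ and slope strictly less than $k-1$ is classical, and the boundary slope $a = k-1$ produces exactly the $p$-old classical contributions matching $S_k^{\prim}(\Gamma_1(N))$. For $r \geq 1$, one invokes the extension in \cite{Coleman-OverconvergentModularFormsOfHigherLevel}: a classical form in $S_k(\Gamma_1(Np^{r'}))$ whose $p$-part of nebentypus has conductor exactly $p^{r'}$ contributes to $S_k^{\dagger,0}(\Gamma_1(Np^r))$ outside the image of $\theta^{k-1}$, whereas forms of strictly smaller $p$-power conductor either lie in that image or correspond to lower-level $\theta$-images. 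This is precisely how $S_k^{\prim}$ is defined.

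The main obstacle is the identification of the cokernel for $r \geq 1$: one must carefully account for how $\theta^{k-1}$ interacts with forms of varying $p$-power level and nebentypus conductor, so that every classical form on the cokernel side is represented exactly once and the slope condition $a \leq k-1$ lines up with the classicality bound. Once this accounting is in place, the surjection onto $S_k^{\prim}$ and the identification of its kernel with the image of $\theta^{k-1}$ follow by comparing dimensions at each fixed slope.
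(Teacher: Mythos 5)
Your reduction to Coleman for $a < k-1$, your $q$-expansion argument for injectivity, and your residue argument for the image landing in $S_k^{\dagger,0}$ all match the paper. But the proposition is not merely a repackaging of Coleman's results: the boundary case $a = k-1$ with $r \geq 1$ is precisely the new content, and it is exactly the case needed for $x_{f,\crit}$ (which has slope $k-1$). Coleman's classicality theorems only give the statement for $a < k-1$ when $r \geq 1$; for $r = 0$ Coleman handles the boundary slope himself in \cite[Corollary 7.2.2]{Coleman-ClassicalandOverconvergent}, but for $r \geq 1$ the extension \cite{Coleman-OverconvergentModularFormsOfHigherLevel} does not deliver the boundary case, so you cannot simply ``invoke'' it. You acknowledge this gap at the end but then close it by saying the cokernel identification ``follows by comparing dimensions at each fixed slope.'' That is not enough: equality of dimensions does not produce a Hecke-module isomorphism, and for Corollary~\ref{cor:jump-dim} one needs the cokernel to be identified \emph{as a Hecke module} with $S_k^{\prim}$, so that the generalized eigenspace at $x_{f,\crit}$ in the cokernel is one-dimensional.

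The paper's proof fills this gap by adapting Coleman's \S 8 argument. After reducing to the $p$-new parts in slope $k-1$, it uses the Atkin--Lehner operator $w_p$, which gives an isomorphism $S_k^{\oper{new}}(\Gamma_1(Np))^{\nu=k-1} \iso S_k^{\oper{new}}(\Gamma_1(Np))^{\nu=0}$, together with the natural pairing $(\;,\;)_k$ on $H_{\parab}(k-2,p)$ and the adjoint formulas \eqref{eqn:pairing-props}, to build a modified pairing $[x,y] = (x,\restrict{\iota(\restrict{y}{w_p})}{w_N})_k$ and checks it is Hecke-equivariant and perfect between $H_{\parab}^{\oper{new}}(k-2,p)^{\nu=k-1}$ and $S_k^{\oper{new}}(\Gamma_1(Np))^{\nu=k-1}$. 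This is the missing step in your proposal; without some version of it, the boundary case $r \geq 1$, $a = k-1$ remains unproved.
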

\begin{proof}
If $a < k-1$ then this follows from \cite{Coleman-ClassicalandOverconvergent} for $r = 0,1$ and \cite{Coleman-OverconvergentModularFormsOfHigherLevel} for $r \geq 2$. If $r = 0$ then the boundary case $a = k-1$ is \cite[Corollary 7.2.2]{Coleman-ClassicalandOverconvergent} and Coleman's argument there may be adapted to handle $r \geq 1$. For simplicity we only sketch an argument when $r = 1$ and use \cite[\S 8]{Coleman-ClassicalandOverconvergent} for notation and  reference. The modifications for $r \geq 2$ are made by making apparent adjustments and using \cite{Coleman-OverconvergentModularFormsOfHigherLevel}.

We now impose the notation of \cite[\S 8]{Coleman-ClassicalandOverconvergent}. In particular, we have the parabolic cohomology space $H_{\parab}(k-2,p)$ which, by \cite[Proposition 8.2]{Coleman-ClassicalandOverconvergent}, is realized as the cokernel
\begin{equation*}
S_k^{\dagger,0}(\Gamma_1(Np))/\theta^{k-1}M_{2-k}^{\dagger}(\Gamma_1(Np)) \iso H_{\parab}(k-2,p).
\end{equation*}
Our goal is to show that $H_{\parab}(k-2,p)$ is isomorphic to $S_{k}^{\prim}(\Gamma_1(Np))$ as Hecke-modules. We denote by $w_N, w_p$ and $w_{Np}$ the Atkin-Lehner operators, depending on fixed roots of unity, on the modular curve $X_1(Np)$ ($w_p$ already appeared in the proof of Proposition \ref{prop:classicalconstruction}).

The space $H_{\parab}(k-2,p)$ is equipped with a natural pairing $(\;,\;)_k$ (denoted $(\;,\;)_k^{\infty}$ by Coleman) and the adjoints of the Hecke operators at level $Np$ are
\begin{align}\label{eqn:pairing-props}
(\restrict{x}{\langle \ell \rangle},y)_k &= (x,\restrict{y}{\langle \ell \rangle^{-1}})_k,\\
\nonumber (\restrict{x}{T_\ell} , y)_k &= (x,\restrict{y}{\langle \ell \rangle^{-1} T_\ell})_k, \text{ and}\\
\nonumber (\restrict{x}{U_p} , y)_k &= (x,\restrict{y}{w_{Np}^{-1}U_pw_{Np}})_k
\end{align}
(\cite[page 32]{Coleman-padicShimura} is a convenient reference). The dimensions of $H_{\parab}(k-2,p)$ and $S_k^{\prim}(\Gamma_1(Np))$ are the same by \cite[Proposition 8.4]{Coleman-ClassicalandOverconvergent} and, moreover, the natural map $\iota: S_k^{\prim}(\Gamma_1(Np)) \goto H_{\parab}(k-2,p)$ is an isomorphism in slope $a < k-1$ \cite[Lemma 8.7]{Coleman-ClassicalandOverconvergent}. Finally, there are decompositions
\begin{align*}
H_{\parab}(k-2,p) &= H_{\parab}(k-2) \dsum H_{\parab}^{\oper{new}}(k-2,p)\\
S_k^{\prim}(\Gamma_1(Np)) &= S_k^{\prim}(\Gamma_1(N)) \dsum S_k^{\oper{new}}(\Gamma_1(Np))
\end{align*}
and \cite[Theorem 7.2]{Coleman-ClassicalandOverconvergent} says that $H_{\parab}(k-2)$ is isomorphic to  $S_k^{\prim}(\Gamma_1(N))$ as Hecke-modules. Thus, it suffices to focus on the slope $a = k-1$ subspaces of the new components. Note that the Atkin-Lehner operator $w_p$ induces an isomorphism 
\begin{equation*}
w_p : S_k^{\oper{new}}(\Gamma_1(Np))^{\nu=k-1} \overto{\iso} S_k^{\oper{new}}(\Gamma_1(Np))^{\nu=0}.
\end{equation*}
This will play the role of the isomorphism $r$ from \cite[p. 223]{Coleman-ClassicalandOverconvergent}. If we denote by $U_p^{\ast}$ the conjugate $w_p^{-1}U_pw_p$ then $U_p^{\ast}U_p=U_pU_p^{\ast} = \langle p \rangle_N p^{k-1}$ on the new subspace $S_k^{\oper{new}}(\Gamma_1(Np))$. In particular, $w_{Np}^{-1}U_pw_{Np} = p^{k-1}U_p^{-1}$ as well. Using that $\iota$ is an isomorphism in slope less than $k-1$, the formulas \eqref{eqn:pairing-props}, and the formula for $U_p^{\ast}U_p$, one checks the pairing 
\begin{equation*}
[x,y] := (x,\restrict{\iota(\restrict{y}{w_p})}{w_N})_k
\end{equation*}
is a Hecke-equivariant, perfect pairing between the spaces $H_{\parab}^{\oper{new}}(k-2,p)^{\nu = k-1}$ and $S_k(\Gamma_1(Np))^{\oper{new},\nu = k-1}$, generalizing \cite[Lemma 7.3]{Coleman-ClassicalandOverconvergent}.
\end{proof}

We return to studying companion points.  If $x: \cal H \goto \bar \Q_p$ is a finite slope system of eigenvalues and $M$ is a Hecke module we use $M_{(x)}$ to denote the generalized eigenspace for $x$. In this notation, we recall that $x$ defines a point on the eigencurve if and only if $M^\dagger_{\kappa^\sharp}(\Gamma)_{(x)} \neq 0$ for some $\kappa^\sharp \in \cal W$.

\begin{corollary}\label{cor:jump-dim}
Let $f$ be a $p$-ordinary newform of level $\Gamma_1(Np^r)$, weight $k\geq 2$ and nebentypus $\ge$ where $v_p(\cond \ge) = r$. The critical slope point $x_{f,\crit}$ has a companion point if and only if $\dim S^\dagger_{\kappa(x_{f,\crit})}(\Gamma)_{(x_{f,\crit})} > 1$.
\end{corollary}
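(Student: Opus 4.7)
The plan is to apply Proposition \ref{prop:control} at slope $a = k-1$ and then localize the resulting exact sequence at the system of eigenvalues of $x := x_{f,\crit}$. Introduce the ``twisted'' system $x' : \cal H \goto \bar \Q_p$ defined by $T_\ell(x') = T_\ell(x)/\ell^{k-1}$ for $\ell \ndvd Np$, $U_p(x') = U_p(x)/p^{k-1}$, and the diamond operators modified by the twist by $\det^{k-1}$. Since $\theta^{k-1}$ multiplies the $\ell$-th $q$-expansion coefficient by $\ell^{k-1}$, it carries the generalized $x'$-eigenspace to the generalized $x$-eigenspace, and the localization of the sequence at $x$ reads
\begin{equation*}
0 \goto M_{2-k}^{\dagger}(\Gamma_1(Np^r))(\det^{k-1})^{\nu = 0}_{(x')} \overto{\theta^{k-1}} S_k^{\dagger,0}(\Gamma_1(Np^r))^{\nu = k-1}_{(x)} \goto S_k^{\prim}(\Gamma_1(Np^r))^{\nu = k-1}_{(x)} \goto 0.
\end{equation*}

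The next step is to identify two of these three terms with more recognizable objects. The rightmost term is one-dimensional: for $r \geq 1$ this follows from multiplicity one applied to the newform $\twid f$; for $r = 0$ the space $S_k^{\prim}(\Gamma_1(N)) \ci S_k(\Gamma_1(Np))$ is the $p$-old subspace, where the two refinements $f_\alpha$ and $f_\beta = \twid f$ of $f$ have distinct $U_p$-eigenvalues $\alpha \neq \beta$ with $v_p(\beta) = k-1$, so only $f_\beta$ lies in the $x$-generalized eigenspace. Meanwhile, the middle term equals $S^\dagger_{\kappa(x_{f,\crit})}(\Gamma)_{(x_{f,\crit})}$: for $r = 0$ this is tautological since $\kappa(x) = z^k$ and $\Gamma = \Gamma_1(N) \intersect \Gamma_0(p)$, while for $r \geq 1$ it is a consequence of the canonical subgroup identification from the proof of Proposition \ref{prop:classicalconstruction}, with the superscript ``$0$'' (trivial supersingular residues) being automatic on the generalized eigenspace of our classical cuspidal system. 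Combining these observations, the sequence above gives
\begin{equation*}
\dim_{\bar \Q_p} S^\dagger_{\kappa(x_{f,\crit})}(\Gamma)_{(x_{f,\crit})} > 1 \iff M_{2-k}^{\dagger}(\Gamma_1(Np^r))(\det^{k-1})^{\nu = 0}_{(x')} \neq 0.
\end{equation*}

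The final step is to match the right-hand non-vanishing with the existence of a companion point. A nonzero eigenform in $M_{2-k}^{\dagger}(\Gamma_1(Np^r))(\det^{k-1})$ with eigenvalue system $x'$ and slope zero produces, via Proposition \ref{prop:eigencurve} applied to $\cal C$ rather than $\cal C^0$ in weight $z^{2-k}$ (or $z^{2-k}\ge_p$ if $r \geq 1$), a point $z \in \cal C(\bar \Q_p)$ whose eigenvalues satisfy exactly the companion relations $p^{k-1}z(U_p) = x(U_p)$, $\ell^{k-1}z(T_\ell) = x(T_\ell)$, and $z(\diamond{a}) = x(\diamond{a})$ from the introduction. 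Conversely, a companion point $z$ of $x$ has $\log \kappa(z) = 2-k$ (by Lemma \ref{lemma:wtvssenwt} applied to the twist $\bar \rho_x \iso \bar \rho_z \tensor \chi_p^{k-1}$ forced by the companion relations), so by Proposition \ref{prop:eigencurve} it corresponds to an overconvergent weight $2-k$ eigenform with eigenvalue system $x'$ and slope zero. The main point that I expect to require care is the bookkeeping in the case $r \geq 1$, where one must verify that the diamond-operator condition $z(\diamond{a}) = x(\diamond{a})$ in the definition of companion point is indeed compatible with the twist by $\det^{k-1}$ appearing in the $\theta$-operator exact sequence; equivalently, that the nebentypus characters match on both sides of the identification.
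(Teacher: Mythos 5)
Your proof is correct and takes essentially the same approach as the paper: apply Proposition~\ref{prop:control} at slope $k-1$, localize the exact sequence at the generalized eigenspace of $x_{f,\crit}$, identify the quotient as one-dimensional by the theory of newforms, and equate nonvanishing of the kernel with the existence of a companion point. You are more explicit than the paper in spelling out the twist, the two directions of the final equivalence (via Proposition~\ref{prop:eigencurve} and Lemma~\ref{lemma:wtvssenwt}), and the diamond-operator bookkeeping that the paper passes over silently.
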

\begin{proof}
Recall, as was mentioned in the proof of Proposition  \ref{prop:classicalconstruction}, that we have identifications
\begin{equation*}
S_k^\dagger(\Gamma_1(Np^r),\ge)^{\text{fin.slope}} = S_{\ge_p\tensor k}^\dagger(\Gamma_0(p^r) \intersect \Gamma_1(N), \ge^p)^{\text{fin.slope}} = S_{\ge_p\tensor k}^\dagger(\Gamma,\ge^p)^{\text{fin.slope}}.
\end{equation*}
Under these identifications, Proposition \ref{prop:control} shows that we have an exact sequence
 \begin{equation*}
  0 \goto M^\dagger_{\ge_p\tensor 2-k}(\Gamma)({\det}^{k-1})_{(x_{f,\crit})} \goto S^\dagger_{\ge_p\tensor k}(\Gamma)_{(x_{f,\crit})} \goto S_{k}(\Gamma)_{(x_{f,\crit})} \goto 0.
 \end{equation*}
 By the theory of newforms, the quotient is one-dimensional. On the other hand, $x_{f,\crit}$ has a companion point if and only if the subobject in the sequence is non-zero.
\end{proof}

Following this result, we can now give the proof of Theorem \ref{thm:maintheorem}.

\begin{proof}[{Proof of Theorem \ref{thm:maintheorem}}]
Let $f$ be as in the theorem and let $x = x_{f,\crit}$ be the associated critical slope point on $\cal C$. Since $\bar \rho_x = \bar \rho_f$ up to a twist we can replace $\bar \rho_f$ by $\bar \rho_x$ (recall the remarks proceeding Proposition \ref{prop:classicalconstruction}). 

The first step is to note that by \cite[Lemma 2.8(iii)]{Bellaiche-CriticalpadicLfunctions}, the length of the scheme-theoretic fiber of $\kappa$ at $x$ is the same as the dimension of $S_{\kappa(x)}^{\dagger}(\Gamma)_{(x)}$. Since the weight map is locally flat, we get that $\kappa$ is ramified at $x$ if and only if $\dim S_{\kappa(x)}^\dagger(\Gamma)_{(x)} > 1$. By Corollary \ref{cor:jump-dim} we see that (b) and (c) are equivalent. We will now show that (b) implies (a) and that (a) implies (c).

Suppose first that $x$ has a companion point $y$. The point $y$ corresponds to an overconvergent $p$-adic modular form $g$ and let $g' := \theta^{k-1}(g)$. It is well known, for example by \cite[Theorem 6.6(ii)]{Kisin-OverconvergentModularForms} or \cite[Proposition 11]{Ghate-OrdinaryForms} , that this implies that $\bar \rho_{g'}$ is split. But, the three representations $\bar \rho_{g'}$,   $\bar \rho_g$ and $\bar \rho_x$ are all equal up to at worst a twist. The first two are because $\theta^{k-1}$ only twists the representation and the first and third because they have the same Frobenius eigenvalues away from $Np$. Thus $\bar \rho_x$ is split at $p$ and (b) implies (a).

Now suppose that $\bar \rho_x$ is split and we will show that $\kappa$ is ramified at $x$. For this is enough to test whether or not the differential $d\kappa: T_x\cal C \goto T_w\cal W$ of the weight map is zero.  To do this, we are going to make auxillary use of the abstract Galois deformation theory. Consider the Zariski tangent space $T_x^{U_p(x)}$ associated to the deformation problem $X_{\bar \rho_x}^{U_p(x)}$ as in Proposition \ref{prop:constantweightprop}. The surjection $R_x^{U_p(x)} \surject A_x$ induces an inclusion $\iota: T_x\cal C \inject T_x^{U_p(x)}$. Let $\twid x \in T_x\cal C$. By Lemma \ref{lemma:wtvssenwt} we have a way to compute $d\kappa(\twid x)$. What we do is look at $\iota(\twid x) = \rho_{\twid x}$, which is a deformation of $\bar \rho_x$ to $L(x)[\ge]$. We then compute its two Sen weights $a(\twid x)\ge$ and $k-1 + b(\twid x)\ge$. The map to the weight space is $d\kappa(\twid x) = b(\twid x)$. Since $x$ has slope $k-1$ and $\bar \rho_x$ is split, the Hodge-Tate weight of the line $D_{\cris}^+(\bar \rho_x)^{\varphi=U_p(x)}$ is $k-1$. So, we can apply Proposition \ref{prop:constantweightprop} with $b = k-1$. By that lemma, we have that every element of $T_x^{U_p(x)}$ has constant weight $k-1$ and thus $b(\twid x) = 0$.
\end{proof}

\bibliography{../../../bibliography/master}
\bibliographystyle{../../../bibliography/math}

\end{document}